\newtheorem{definition}{Definition}[section]
\newtheorem{theorem}[definition]{Theorem}
\newtheorem{proposition}[definition]{Proposition}
\newtheorem{lemma}[definition]{Lemma}
\newtheorem{corollary}[definition]{Corollary}
\newtheorem{claim}[definition]{Claim}
\newcommand{\deflogic}{\mathcal{L}}
\newcommand{\defframe}{\mathcal{K}}
\newcommand{\defalg}{\mathcal{A}}
\newcommand{\mdl}{\Box}
\newcommand{\logick}{\mathbf{K}}
\newcommand{\propgl}{\mathbf{GL}}
\newcommand{\pglsys}{\mathsf{NGL}}
\newcommand{\framefi}{\mathfrak{FI}}
\newcommand{\framecw}{\mathfrak{CW}}
\newcommand{\framelf}{\mathfrak{LF}}
\newcommand{\iand}{\bigwedge}
\newcommand{\da}{\downarrow}
\newcommand{\thn}{\ \Rightarrow\ }
\newcommand{\eq}{\ \Leftrightarrow\ }
\newcommand{\yy}{\rightarrow}
\newcommand{\gldb}{\db_{\mathsf{NGL}}}
\newcommand{\db}{\vdash}
\newcommand{\vl}{\models}
\newcommand{\gm}{\Gamma}
\newcommand{\power}{\mathcal{P}}
\newcommand{\propvar}{\mathsf{Prop}}
\newcommand{\qflt}{\mathcal{F}_{Q}}
\newcommand{\nec}{\Box}
\newcommand{\dframe}{\mathsf{Frm}_{Q}}
\newcommand{\dalg}{\mathsf{Alg}}
\newcommand{\dmndn}{\text{\rm ($\Diamond^{\ast}$)}}
\newcommand{\frmwk}{\mathfrak{F}_{\text{{\rm W}$\Diamond^{\ast}$}}}
\newcommand{\frmst}{\mathfrak{F}_{\text{{\rm S}$\Diamond^{\ast}$}}}
\begin{document}

\title[An $\omega$-rule for the logic of provability]
{An $\omega$-rule for the logic of provability and its models}
\author{Katsumi Sasaki}
\author{Yoshihito Tanaka}
\date{}

\maketitle

\begin{abstract}
In this paper, we discuss a proof system $\pglsys$
for the logic $\propgl$ of provability,  
which is equipped with an $\omega$-rule.
We show the three classes of transitive Kripke frames, 
the class which strongly validates the $\omega$-rule, 
the class which weakly validates the $\omega$-rule,  
and 
the class which is defined by the L\"{o}b formula, 
are mutually different, while all of them characterize $\propgl$. 
This gives an
example of a proof system $P$ and 
a class $C$ of Kripke frames such that $P$ is sound with respect to $C$ but 
the soundness cannot be proved by simple induction on the height of the derivations 
in $P$.  
We also show Kripke completeness of $\pglsys$ in an algebraic manner. 
As a corollary, we show that the class of modal algebras which is defined by 
equations 
$\mdl x\leq\mdl\mdl x$ and $\iand_{n\in\omega}\Diamond^{n}1=0$ 
is not a variety. 
\end{abstract}

\section{Introduction}

In this paper, we discuss a proof system $\pglsys$ for the logic $\propgl$ of provability,   
which is equipped with an $\omega$-rule.  
We show the three classes of transitive Kripke frames, 
the class which strongly validates the $\omega$-rule, 
the class which weakly validates the $\omega$-rule,  
and 
the class which is defined by the L\"{o}b formula, 
are mutually different, while all of them characterize $\propgl$. 
This gives an
example of a proof system $P$ and 
a class $C$ of Kripke frames such that $P$ is sound with respect to $C$ but 
the soundness cannot be proved by simple induction on the height of the derivations 
in $P$.  
We also show Kripke completeness of $\pglsys$ in an algebraic manner. 
As a corollary, we show that the class of modal algebras which is defined by 
equations 
$\mdl x\leq\mdl\mdl x$ and $\iand_{n\in\omega}\Diamond^{n}1=0$ 
is not a variety.

It is known that $\propgl$ is sound and complete 
with respect to the class $\framecw$ of transitive and conversely well-founded frames
and also to the class $\framefi$ of finite, transitive, and irreflexive 
Kripke frames
(e.g., \cite{bls93,hgh-crs96,chg-zkh97,blc-rjk-vnm01}).  
Therefore, $\propgl$ is sound and complete with respect to any class $C$ of 
Kripke frames such that  $\framefi\subseteq C\subseteq\framecw$.

One of such classes is the class $\framelf$ of 
transitive Kripke frames of locally finite height
(see Definition \ref{defframelf}). 
In \cite{tnk18}, a cut-free proof system with an $\omega$-rule for a predicate 
extension of $\propgl$ is introduced, and completeness of the system with respect to 
$\framelf$ is proved. 
The proof system in \cite{tnk18} is defined in Gentzen-style, but 
the $\omega$-rule in it is 
essentially same 
as the following: 
$$
\dmndn: \ 
\frac
{\phi\supset\Diamond^{n}\top\hspace{10pt}
(\text{$\forall n\in\omega$})}
{\phi\supset\bot}.
$$

In this paper, we introduce a proof system $\pglsys$ for $\propgl$ which 
is equipped with the $\omega$-rule $\dmndn$, and 
discuss two classes $\frmwk$ and $\frmst$ of transitive Kripke frames in which 
the rule 
$\dmndn$ is weakly valid and strongly valid, respectively
(see Definition \ref{defrulevl}). 
We show the following relations hold among four classes of transitive Kripke frames: 
\begin{equation}\label{mainresult}
\framelf
=
\frmst
\subsetneqq
\frmwk
\subsetneqq
\framecw.  
\end{equation}
By $\frmwk\subsetneqq\framecw$, 
it follows that 
the pair $\pglsys$ and $\framecw$ is an 
example of a proof system $P$ and 
a class $C$ of Kripke frames such that $P$ is sound and complete with respect to $C$ but 
the soundness cannot be proved by simple induction on the height of the derivations 
in $P$.

While the Kripke completeness of $\pglsys$ with 
respect to $\framelf$ is proved in \cite{tnk18} by 
Henkin-construction, 
we give another proof of it by means of modal algebras.  
It is known that 
Kripke completeness of many kinds of modal logics 
follows from the J\'{o}nsson-Tarski representation of modal algebras
\cite{jns-trs51,jns-trs52,chg-zkh97,blc-rjk-vnm01}. 
However, it is not enough 
to prove Kripke completeness of logics such as predicate modal logics, infinitary modal logics, 
or modal logics with $\omega$-rules, 
as the embedding given in it
does not preserve infinite meets nor joins, in general. 
To deal with such logics, 
an infinitary extension of 
the J\'{o}nsson-Tarski representation 
is introduced in \cite{tnk-ono98}, and is used 
to show Kripke completeness of 
predicate modal logics \cite{tnk-ono98}, infinitary modal logics \cite{tnk-ono98},  and
modal logics with $\omega$-rules \cite{tnknoncpt}.  
In this paper, we introduce another infinitary extension of 
the J\'{o}nsson-Tarski representation 
for the modal algebras which satisfy $\iand_{n\in\omega}\Diamond^{n}1=0$. 
This representation theorem 
can be applied to some modal algebras which do not satisfy the conditions 
of the infinitary representation theorem in 
\cite{tnk-ono98}. 
As a corollary, we show that the class of modal algebras which is defined by equations
$\mdl x\leq\mdl\mdl x$ and $\iand_{n\in\omega}\Diamond^{n}1=0$ 
is not a variety.

The construction of this paper is the following. 
In Section \ref{syntax_semantics}, we fix definitions and notations and 
recall basic properties of modal logic. 
In Section \ref{modal_algebras}, 
we introduce the infinitary extension of 
the J\'{o}nsson-Tarski representation.
In Section \ref{non-compact_rule}, 
we introduce the system $\pglsys$ and 
show its Kripke completeness. 
In Section \ref{modelsofpglsys}, we discuss classes of Kripke frames which 
characterize $\propgl$.

\section{Preliminaries}\label{syntax_semantics}

In this section, we recall basic definitions and properties of modal logic. 
The language we consider consists of the 
following symbols:
\begin{enumerate} 
\item
a countable set $\propvar$ of propositional variables;

\item
$\top$ and $\bot$;

\item logical connectives: 
$\land$,  $\neg$;

\item
modal operator $\mdl$.
\end{enumerate}

The set $\Phi$ of formulas is defined recursively as follows:

\begin{enumerate}
\item
$\top$, $\bot$, 
and each $p\in\propvar$ are in $\Phi$;

\item 
if $\phi$ and $\psi$ are in $\Phi$
then $(\phi\land\psi)\in \Phi$;

\item 
if $\phi\in\Phi$ 
then $(\neg\phi)\in\Phi$, and  
$(\mdl\phi)\in\Phi$.
\end{enumerate}

The symbols $\lor$ and $\supset$ are defined in a usual way.  
We write 
$\phi\equiv\psi$  and $\Diamond\phi$ to abbreviate
$(\phi\supset\psi)\land(\psi\supset\phi)$ and $\neg\nec\neg\phi$, 
respectively.  
For each $n\in\omega$, 
$\Box^{n}$ and $\Diamond^{n}$ denote $n$-times applications of $\Box$
and $\Diamond$, respectively.

\begin{definition}
An {\em inference rule} is a pair 
$(\gm,\phi)$ of (possibly infinite) set $\gm$ of formulas and a 
single formula $\phi$. 
A {\em proof system} is a collection of inference rules. 
Let $P$ be a proof system. If $(\emptyset,\phi)\in P$, 
we call $\phi$ an {\em axiom} of $P$. 
A formula $\phi$ is said to be {\em derivable} in $P$
if either 
$\phi$ is an axiom of $P$, 
or
there exists an inference rule 
$(\gm,\phi)$ of $P$ such that 
every $\gamma\in\gm$ is derivable in $P$. 
\end{definition}

As usual, we write an inference rule $(\gm,\phi)$ by $\dfrac{\gm}{\phi}$.

\begin{definition}\label{def-kripke-model}
A {\em Kripke frame} is a pair $\langle W,R\rangle$, 
where $W$ is a non-empty set and $R$ is a binary relation on $W$.  
A pair $\langle F,v\rangle$ is said to be 
a {\em Kripke model}, 
if $F=\langle W,R\rangle$ is a Kripke frame and 
$v$ is a mapping from $\propvar$ to $\power(W)$, 
which is called a {\em valuation} on $F$. 
For each valuation $v$ on $F$, 
the domain $\propvar$ is extended to $\Phi$ in the following way: 
\begin{enumerate}
\item
$v(\top)=W$, 
$v(\bot)=\emptyset$; 
\item 
$v(\phi\land\psi)=v(\phi)\cap v(\psi)$;
\item 
$v(\neg\phi)=
W\setminus v(\phi)$;
\item 
$v(\mdl\phi)=\mdl_{F} v(\phi)$, 
where $\Box_{F}$ is a unary operator on $\power(W)$ defined by 
$$
\mdl_{F} S
=
\{w\in W\mid \forall w'\in W((w,w')\in R\thn w'\in S)\}
$$
for any $S\subseteq W$. 
\end{enumerate}
\end{definition}

\begin{definition}\label{defvl}
Let $F$ be a Kripke frame. 
We say a formula $\phi$ is {\em valid} in $F$
($F\vl\phi$, in symbol), 
if $v(\phi)=W$ for any valuation 
$v$ on $F$. 
Let $\gm$ be a set of formulas. 
We say that $\gm$ is {\em valid} in $F$  
($F\vl\gm$, in symbol), 
if $F\vl\gamma$ for every $\gamma\in\gm$.  
We write $\defframe(\gm)$ for the class of Kripke frames 
in which $\gm$ is valid. 
Let $C$ be a class of Kripke frames. 
A formula $\phi$ is said to be {\em valid} in $C$
if $F\vl\phi$ for every $F\in C$. 
We write $\deflogic(C)$ for the 
set of formulas which are valid in $C$. 
\end{definition}

\begin{definition}\label{defrulevl}
Let $F=\langle W,R\rangle$ be a Kripke frame and  
$(\gm,\phi)$ be an inference rule. 
We say that $(\gm,\phi)$ is {\em weakly valid} in $F$, 
if 
$
F\vl\gm
$
then $F\vl\phi$. 
We say that $(\gm,\phi)$ is {\em strongly valid} in $F$, 
if for any valuation $v$ on $F$
$$
\bigcap_{\gamma\in\gm}v(\gamma)\subseteq v(\phi). 
$$
Let $C$ be a class of Kripke frames. 
We say that an inference rule is {\em weakly (or strongly) valid} in $C$ 
if it is weakly (or strongly) valid in every frame $F\in C$, respectively. 
\end{definition}

For example, the necessitation rule is weakly valid in the class of 
all Kripke frames, 
but is not strongly valid in it.

\begin{proposition}\label{wkvlsound}
Let $P$ be a proof system 
and $F$ be a Kripke frame. 
If every inference rule in $P$ is weakly valid in $F$, then 
every formula which is derivable in $P$ is valid in $F$. 
\end{proposition}

\begin{proof}
Induction on the height of the derivations. 
\end{proof}

However, the converse of Proposition \ref{wkvlsound}
does not hold, in general. We give a counterexample in Theorem \ref{wknotcw}.

\bigskip

\begin{definition}\label{defframelf}
A Kripke frame $F=\langle W,R\rangle$ is said to be {\em conversely well-founded}, 
if there exists no infinite list $(w_{i})_{i\in\omega}$ such that 
$w_{i}\in W$ and $(w_{i},w_{i+1})\in R$ for every $i\in\omega$. 
Let $F=\langle W,R\rangle$ be a Kripke frame and 
$w=w_{0}\in W$. 
We say that the {\em height from $w$} is finite, 
if the supremum of the length of lists 
$w_{0},w_{1},\ldots,w_{n}\in W$ such that 
$(w_{i},w_{i+1})\in R$ is finite. 
A Kripke frame
$F=\langle W,R\rangle$ is said to be of {\em locally finite height}, 
if for any $w\in W$, the height from $w$ is finite. 
We write $\framecw$, $\framelf$, and $\framefi$ for 
the classes of transitive Kripke frames which are
conversely well-founded, of locally finite height, 
and finite and irreflexive, respectively. 
\end{definition}

\begin{definition}
An algebra 
$\langle A;\lor,\land,-,\mdl,0,1\rangle$
is called a {\em modal algebra} if it satisfies the following conditions:
\begin{enumerate}
\item
$\langle A;\lor,\land,-,0,1\rangle$ is a Boolean algebra;
\item
$
\mdl 1=1
$
and for any $x$, $y\in A$, 
$$
\mdl x\land \mdl y=\mdl(x\land y). 
$$
\end{enumerate}

\noindent
Let $A$ and $B$ be modal algebras. 
A  map $f:A\yy B$ is called a {\em homomorphism 
of modal algebras} if it is 
a homomorphism of Boolean algebras and 
satisfies 
$
f(\mdl x)=\mdl f(x)
$
for any $x\in A$. 
An injective homomorphism is called an {\em embedding}. 
We write $\Diamond x$ for ${-}\mdl{-}x$, for each $x\in A$. 
\end{definition}

\begin{definition}
An {\em algebraic model} for modal logic is a pair $\langle A,v\rangle$, 
where $A$ is a modal algebra and 
$v$ is a mapping from $\propvar$ to $A$. 
For each valuation $v$ from $\propvar$ to $A$, 
the domain $\propvar$ is extended to $\Phi$ in the following way: 
\begin{enumerate}
\item
$v(\top)=1$, 
$v(\bot)=0$; 
\item 
$v(\phi\land\psi)=v(\phi)\land v(\psi)$;
\item 
$v(\neg\phi)=
-v(\phi)$;
\item 
$v(\mdl\phi)=\mdl v(\phi)$. 
\end{enumerate}
\end{definition}

For each formula $\phi$ and each modal algebra $A$, 
we write $A\vl\phi$ if $v(\phi)=1$ for every valuation $v$ on $A$. 
Other relations between (classes of) algebraic models 
and (sets of) formulas are defined 
in the same manner as 
Definition \ref{defvl}. 
For each set $\gm$ of formulas, 
we write $\defalg(\gm)$ for the set of modal algebras in which $\gm$ is valid.

\section{An extension of the J\'{o}nsson-Tarski representation theorem}\label{modal_algebras}

In this section, we recall the relationship between Kripke frames and modal algebras, 
and present an extension of the J\'{o}nsson-Tarski representation theorem.

\begin{definition}
For each Kripke frame $F=\langle W,R\rangle$, 
we write $\dalg(F)$ for the modal algebra
$$
\dalg(F)=
\langle
\power(W);\cup,\cap,W\setminus-,\Box_{F},\emptyset,W
\rangle,  
$$
where $\Box_{F}$ is the operator defined in Definition \ref{def-kripke-model}. 
\end{definition}

It is easy to see that for any Kripke frame 
$F=\langle W,R\rangle$ and 
any $S\subseteq\power(W)$, 
\begin{equation*}\label{framebarcan}
\mdl_{F}\bigcap S=\bigcap_{s\in S}\mdl_{F} s
\end{equation*}
holds in $\dalg(F)$.

\begin{theorem}\label{path-meet}
Let $F=\langle W,R\rangle$ be a Kripke frame. Then, the following two conditions are equivalent: 
\begin{enumerate}
\item
$F$ is a frame of locally finite height; 
\item
$\iand_{n\in\omega}{\Diamond_{F}}^{n}1=0$ holds in $\dalg(F)$. 
\end{enumerate}
\end{theorem}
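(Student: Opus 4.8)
The plan is to unwind both conditions through the path‑counting reading of the diamond. First I would record that in $\dalg(F)$ the dual operator satisfies $\Diamond_{F}X = R^{-1}(X) = \{w\in W : \exists x\in X,\ (w,x)\in R\}$ for every $X\subseteq W$, which is a one‑line computation from $\Box_{F}X = W\setminus R^{-1}(W\setminus X)$. From this an easy induction on $n$ gives the path characterization
$$
\Diamond_{F}^{n}1 = \{w\in W : \text{there are } w_{1},\dots,w_{n}\in W \text{ with } w_{0}=w \text{ and } (w_{i-1},w_{i})\in R \text{ for } 1\le i\le n\},
$$
that is, $\Diamond_{F}^{n}1$ is exactly the set of worlds from which there is an $R$‑path of length $n$; the base case $n=0$ gives $\Diamond_{F}^{0}1=W$.

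Next I would use that $\dalg(F)$, being a powerset algebra, is a complete Boolean algebra, so the meet $\iand_{n\in\omega}\Diamond_{F}^{n}1$ exists and coincides with the set‑theoretic intersection $\bigcap_{n\in\omega}\Diamond_{F}^{n}1$. Hence condition (2) says precisely that no world lies in every $\Diamond_{F}^{n}1$, i.e.\ that for each $w\in W$ there is some $N$ with no $R$‑path of length $N$ starting at $w$.

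Finally I would connect this with local finiteness of height. For a fixed $w\in W$ put $S_{w}=\{n\in\omega : \text{there is an $R$‑path of length $n$ from } w\}$. Since a prefix of a path is again a path, $S_{w}$ is downward closed, hence is either all of $\omega$ or an initial segment $\{0,1,\dots,k\}$. Therefore the supremum of the lengths of lists $w_{0}=w,w_{1},\dots,w_{n}$ with $(w_{i},w_{i+1})\in R$ is finite iff $S_{w}\neq\omega$ iff there is some $N$ with $w\notin\Diamond_{F}^{N}1$. Quantifying over all $w\in W$: $F$ is of locally finite height iff for every $w$ such an $N$ exists, iff $\bigcap_{n\in\omega}\Diamond_{F}^{n}1=\emptyset$, iff $\iand_{n\in\omega}\Diamond_{F}^{n}1=0$ in $\dalg(F)$, which is the desired equivalence.

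I do not expect a genuine obstacle here; the only points needing a little care are stating the path characterization of $\Diamond_{F}^{n}1$ precisely (watching the off‑by‑one between path length and list length) and noting explicitly that $\dalg(F)$ is complete, so that the algebraic meet agrees with the intersection. In particular neither transitivity of $R$ nor any form of König's lemma is required, since the statement only concerns arbitrarily long \emph{finite} paths from a world.
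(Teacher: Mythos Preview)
Your proposal is correct and follows essentially the same route as the paper: compute $\Diamond_{F}^{n}1$ as the set of worlds admitting an $R$-path of length $n$, identify the algebraic meet with the set-theoretic intersection, and read off the equivalence with local finiteness of height. Your write-up is in fact more careful than the paper's, which compresses the argument into a single chain of equivalences and leaves implicit both the completeness of $\dalg(F)$ and the downward-closure observation about $S_{w}$ that you spell out.
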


\begin{proof}
For any $w\in W$, 
\begin{align*}
w\in \bigcap_{n\in\omega}{\Diamond_{F}}^{n}W
&\eq
\forall n\in\omega
\exists w_{0},w_{1},\ldots,w_{n}\in W
\left(
w=w_{0},\ (w_{i},w_{i+1})\in R
\right) \\
&\eq
\text{the height from $w$ is not finite}. 
\end{align*}
\end{proof}

\begin{theorem}\label{vlduality}
{\rm (e.g. \cite{chg-zkh97,blc-rjk-vnm01})}.  
Let $F=\langle W, R\rangle$ be a Kripke frame. 
For every formula $\phi\in\Phi$,  
$F\vl\phi$ if and only if $\dalg(F)\vl\phi$. 
\end{theorem}

Let $A$ be a modal algebra. An upward closed subset $F$ of $A$ is called a {\em filter} 
of $A$, if $x\land y\in F$ for every $x$ and $y$ in $F$. 
A filter $F$ of $A$ is said to be {\em prime}, if $F\not=A$
and $x\lor y\in F$ implies $x\in F$ or $y\in F$ for every $x$ and $y$ in $A$. 
It is easy to see that if $F$ is a prime filter of $A$, 
then either $x\in F$ or $-x\in F$ but not both, for every $x\in A$. 
Ideals of $A$ are the dual objects of filters. 
The following is an infinitary extension of prime filters.

\begin{definition}\label{def:q-filter}
{\rm
(Rasiowa-Sikorski, \cite{rsw-skr63}). 
}
Let $A$ be a modal algebra and 
$Q\subseteq\power(A)$. 
A prime filter $\alpha$ of $A$ is called 
a {\em $Q$-filter} of $A$, if for any  
$X\in Q$
$$
X\subseteq \alpha
\text{ and }
\iand X\in A   
\thn \iand X\in \alpha
$$ 
holds.  The set of all $Q$-filters of 
$A$ is denoted by $\qflt(A)$. 
\end{definition}

\begin{definition}
Let $A$ be a modal algebra and 
$Q\subseteq\power(A)$. 
We write $\dframe(A)$ for the Kripke frame 
$\langle\qflt(A),R_{Q}\rangle$, 
where $R_{Q}$ is a 
binary relation  on $\qflt(A)$ which is defined by 
$$
(\alpha,\beta)\in R_{Q}\eq \Box^{-1}\alpha\subseteq\beta,  
$$
for any  $\alpha$ and $\beta\in\qflt(A)$.  
Here, $\Box^{-1}\alpha$ denotes the inverse image of the 
unary operator $\Box$, that is, 
$\Box^{-1}\alpha=\{x\mid\Box x\in\alpha\}$.  
\end{definition}

It is easy to see that if $\alpha$ is a filter of $A$, 
then so is $\Box^{-1}\alpha$.

\begin{theorem}\label{path-meet-alg}
Let $A$ be a modal algebra which satisfies
$
\iand_{n\in\omega}\Diamond^{n}1=0
$
and $Q=\{\{\Diamond^{n}1\mid n\in\omega\}\}$. 
Then, 
$\dframe(A)$ is a frame of locally finite height. 
\end{theorem}

\begin{proof}
Suppose $\iand_{n\in\omega}\Diamond^{n}1=0$ holds in $A$. 
Take any $\alpha\in\qflt(A)$. By definition, there exists $n\in\omega$ such that 
$\Diamond^{n}1\not\in \alpha$. 
Since $\alpha$ is a prime filter of $A$, ${-}\Diamond^{n}1=\mdl^{n}0\in \alpha$. Therefore, 
the height from $\alpha$ 
is at most $n$.  Hence, $\dframe(A)$ is a frame of locally finite height. 
\end{proof}

We show that for each modal algebra 
$A$ which satisfies $\iand_{n\in\omega}\Diamond^{n}1=0$, 
there exists an embedding $\eta_{A}:A\yy\dalg(\dframe(A))$ such that 
$\iand_{n\in\omega}\eta_{A}\left(\Diamond^{n}1\right)=0$, 
where 
$Q=\{\{\Diamond^{n}1\mid n\in\omega\}\}$. 
We recall the following two theorems, which we use to show the 
extension of the J\'{o}nsson-Tarski representation theorem.

\begin{theorem}\label{th:pft}
{\rm (Prime filter theorem, e.g., \cite{dvy-prs90})}. 
Let $A$ be a Boolean algebra. 
Suppose $\alpha$ is a filter of $A$ and 
$\beta$ is an ideal of $A$ such that $\alpha\cap \beta=\emptyset$. 
Then, there exists a prime filter 
$\gamma$ of $A$ such that
$\alpha\subseteq\gamma$ and 
$\gamma\cap\beta=\emptyset$. 
\end{theorem}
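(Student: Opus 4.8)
The plan is to obtain $H$ as a maximal member of the family of filters extending $F$ that avoid $G$, and then to check directly that maximality forces primeness; this is the standard Lindenbaum/Zorn-style argument.

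First I would let $\mathcal{Z}$ be the set of all filters $H'$ of $A$ with $F\subseteq H'$ and $H'\cap G=\emptyset$, partially ordered by inclusion. This family is non-empty because $F\in\mathcal{Z}$, and the union of any chain in $\mathcal{Z}$ is again a filter (a chain of filters is closed under binary meets and upward closure), still contains $F$, and is still disjoint from $G$; hence every chain has an upper bound in $\mathcal{Z}$. By Zorn's lemma $\mathcal{Z}$ has a maximal element $H$. Since an ideal contains $0$, we have $0\in G$, so $0\notin H$, i.e.\ $H$ is a proper filter.

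It then remains to show $H$ is prime. Suppose toward a contradiction that $a\lor b\in H$ while $a\notin H$ and $b\notin H$. The filter generated by $H\cup\{a\}$, namely $\{x\in A : x\geq h\land a\ \text{for some}\ h\in H\}$, properly contains $H$ (it contains $a$) and still contains $F$, so by maximality of $H$ it cannot belong to $\mathcal{Z}$; therefore it meets $G$, i.e.\ there are $h_{1}\in H$ and $g_{1}\in G$ with $h_{1}\land a\leq g_{1}$. Symmetrically there are $h_{2}\in H$ and $g_{2}\in G$ with $h_{2}\land b\leq g_{2}$. Then $g_{1}\lor g_{2}\in G$ because $G$ is an ideal, while by distributivity
$$
g_{1}\lor g_{2}\ \geq\ (h_{1}\land a)\lor(h_{2}\land b)\ \geq\ (h_{1}\land h_{2}\land a)\lor(h_{1}\land h_{2}\land b)\ =\ (h_{1}\land h_{2})\land(a\lor b).
$$
Since $h_{1}\land h_{2}\in H$ and $a\lor b\in H$, the right-hand side lies in $H$, so $g_{1}\lor g_{2}\in H$ by upward closure, contradicting $H\cap G=\emptyset$. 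Hence $H$ is prime and has the required properties.

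The argument uses only Zorn's lemma and elementary Boolean reasoning, so I do not anticipate a genuine obstacle; the single point that needs a little care is the distributive computation displayed above, which is what converts the two separately obtained witnesses $g_{1},g_{2}$ into one element of $G\cap H$.
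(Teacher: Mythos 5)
Your proof is correct and complete: the Zorn's lemma argument on filters extending $F$ and disjoint from $G$, followed by the distributivity computation showing that a maximal such filter must be prime, is the standard proof of this theorem (it in fact works in any distributive lattice, not just a Boolean algebra). The paper itself states this result as a known theorem with a citation and gives no proof, so there is nothing to compare against; your argument fills that gap correctly, including the small but necessary observations that $0\in G$ forces $H$ to be proper and that $G$ being an ideal is what lets you combine the two witnesses $g_{1},g_{2}$ into the single element $g_{1}\lor g_{2}$ of $G\cap H$.
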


\begin{theorem}\label{th:rs}
{\rm
(Rasiowa-Sikorski, \cite{rsw-skr63}). 
}
Let $A$ be a Boolean algebra and 
$Q$ be a countable subset of $\power(A)$. 
For any $a_{1}$ and  $a_{2}\in A$,  
if $a_{1}\not\leq a_{2}$ then there exists $\alpha\in\qflt(A)$ such that 
$a_{1}\in\alpha$ and $a_{2}\not\in\alpha$. 
\end{theorem}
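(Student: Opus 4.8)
The plan is to build the desired $Q$-filter by a standard Rasiowa–Sikorski omitting-argument, enumerating the countably many meet-conditions in $Q$ and dealing with them one at a time while preserving a separation property. First I would fix an enumeration $X_{1},X_{2},\dots$ of $Q$ (possible since $Q$ is countable), and, using the hypothesis $a_{1}\not< a_{2}$, start from the filter $F_{0}$ generated by $a_{1}$ and the ideal $G_{0}$ generated by $a_{2}$; note $F_{0}\cap G_{0}=\emptyset$ precisely because $a_{1}\not\le a_{2}$ in the Boolean ordering. The key invariant I want to maintain through a countable construction is a pair $(F_{n},I_{n})$ with $F_{n}$ a filter, $I_{n}$ an ideal, $F_{n}\cap I_{n}=\emptyset$, and with $a_{1}\in F_{n}$, $a_{2}\in I_{n}$.

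The heart of the argument is the inductive step handling $X_{n}=\{x_{i}\}_{i}$ with $b=\iand X_{n}\in A$. If the current filter $F_{n-1}$ already decides this instance — either some $x_{i}\notin F_{n-1}^{\uparrow}$ or $b$ is forced in — nothing needs doing; otherwise I must arrange that the eventual prime filter either omits some $x_{i}$ or contains $b$. The standard trick: if the ideal generated by $I_{n-1}$ together with $-b$ still misses $F_{n-1}$, enlarge $I_{n-1}$ by adjoining $-b$ (so $b$ will be excluded, hence we are free to omit some $x_{i}$ later); otherwise $F_{n-1}$ together with $b$ is consistent with $I_{n-1}$, and we adjoin $b$ to the filter. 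A short Boolean computation shows exactly one of these two options keeps $F\cap I=\emptyset$: if adjoining $-b$ to $I_{n-1}$ meets $F_{n-1}$, that means some $f\in F_{n-1}$ satisfies $f\le i\lor -b$ for some $i\in I_{n-1}$, equivalently $f\land -i\le b$, so $f\land -i\in F_{n-1}$ witnesses that $b$ is consistent to add to the filter. In the case where we added $-b$ to the ideal, $b$ is now in the ideal $I_{n}$, so when we finally extract a prime filter $H\supseteq\bigcup_n F_n$ disjoint from $\bigcup_n I_n$, we get $b\notin H$; since $b=\iand X_n$, primeness is not even needed here — we just need some $x_i\notin H$, which follows because $b\notin H$ while $x_i\le b$ would force... wait, that is backwards: $b\le x_i$, so $b\notin H$ does not immediately give $x_i\notin H$. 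So here the correct move is the \emph{other} one: if adjoining each individual $x_i$ keeps things consistent we would be stuck, so instead I should, for the step, try to add $-x_i$ to the ideal for \emph{some} $i$; this is possible exactly when $b\notin\bigcup$ of what the filter forces, and it is this case-split (add $b$ to the filter, or add some $-x_i$ to the ideal) that makes the $Q$-filter condition hold.

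After $\omega$ steps, set $F=\bigcup_{n}F_{n}$, $I=\bigcup_{n}I_{n}$; these are a filter and an ideal (directed unions), still disjoint, with $a_{1}\in F$ and $a_{2}\in I$. Apply Theorem~\ref{th:pft} to obtain a prime filter $H\supseteq F$ with $H\cap I=\emptyset$; then $a_{1}\in H$ and $a_{2}\notin H$. Finally I check $H\in\qflt(A)$: given $X_{n}\in Q$ with $X_{n}\subseteq H$, the step-$n$ construction ensured either $\iand X_{n}\in F_{n}\subseteq H$ (done) or $-x_{i}\in I_{n}$ for some $i$, i.e.\ $x_{i}\notin H$, contradicting $X_{n}\subseteq H$; hence $\iand X_{n}\in H$. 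The main obstacle is getting the case-split in the inductive step exactly right so that the preserved consistency genuinely yields the $Q$-filter property — it is easy to add the wrong element (as my false start above shows), and one must verify the Boolean identity $f\le i\lor(-x_{i})$ for some $i$ is equivalent to $f\land b\le i$ failing, pinning down which side receives the new element.
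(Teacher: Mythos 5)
The paper itself does not prove this statement (it is quoted from Rasiowa--Sikorski), so I assess your proposal on its own. Your overall plan is the right one --- enumerate $Q=\{X_1,X_2,\dots\}$, run a countable construction maintaining a disjoint filter/ideal pair $(F_n,I_n)$ with $a_1\in F_0$ and $a_2\in I_0$, and finish with Theorem~\ref{th:pft} --- but the inductive step, which is the entire content of the lemma, is wrong as written. The persistent problem is a sign error about which side of the pair an element must be placed on in order to be \emph{omitted} from the final prime filter $H$: since $H$ is prime and disjoint from $I=\bigcup_n I_n$, putting $-c$ into the ideal forces $c\in H$, not $c\notin H$. So ``adjoin $-b$ to $I_{n-1}$ so that $b$ will be excluded'' and, in your final version, ``add some $-x_i$ to the ideal \dots\ i.e.\ $x_i\notin H$'' are both backwards; to guarantee $x_i\notin H$ you must put $x_i$ itself into the ideal (equivalently, $-x_i$ into the filter). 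With the step as you state it, the concluding verification that $H\in\qflt(A)$ fails: from $-x_i\in I_n$ you get $x_i\in H$, which does not contradict $X_n\subseteq H$.

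Even after correcting the signs, the key dichotomy is asserted rather than proved: you must show that if the filter generated by $F_{n-1}\cup\{b\}$ (where $b=\iand X_n$) meets $I_{n-1}$, then there \emph{exists} $x\in X_n$ such that the ideal generated by $I_{n-1}\cup\{x\}$ is still disjoint from $F_{n-1}$. This is the only place the hypothesis $\iand X_n\in A$ enters, and it is exactly the step your last sentence admits is not pinned down; moreover the equivalence you propose to check there is not the right one, and the earlier computation ``$f\le i\lor -b$, equivalently $f\land -i\le b$'' is incorrect (the correct reading of $f\le i\lor -b$ is $f\land b\le i$, i.e.\ $f\land -i\le -b$). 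The clean way to close the gap: at each finite stage $F_{n-1}=\mathord{\uparrow}f$ and $I_{n-1}=\mathord{\downarrow}i$ are principal with $f\land -i\neq 0$; if $b$ cannot be consistently added to the filter then $f\land -i\le -b$, and if in addition no $x\in X_n$ could be consistently added to the ideal then $f\land -i\le x$ for every $x\in X_n$, whence $f\land -i\le\iand X_n=b$ and so $f\land -i=0$, a contradiction. Once this lemma is supplied and the signs are fixed, your construction does yield the theorem.
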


Now, we present the extension of 
the J\'{o}nsson-Tarski representation theorem.  

\begin{theorem} \label{infrep}
Let $A$ be a modal algebra which satisfies 
\begin{equation}\label{infmeet}
\iand_{n\in\omega}\Diamond^{n}1=0. 
\end{equation}
Let $Q=\{\{\Diamond^{n}1\mid n\in\omega\}\}$. 
Define a map $\eta_{A}:A\yy\dalg\left(\dframe(A)\right)$ by 
$$x\mapsto\{\alpha\in\qflt(A)|x\in\alpha\}$$
for any $x\in A$. Then, 
$\eta_{A}$ is an embedding of modal algebras such that
\begin{equation}\label{canonicalbl}
\bigcap_{n\in\omega}\eta_{A}\left(\Diamond^{n}1\right)=\emptyset. 
\end{equation}
\end{theorem}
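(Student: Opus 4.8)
The plan is to verify separately that $\eta_A$ is a Boolean homomorphism, that it commutes with $\Box$, that it is injective, and that it satisfies the extra infinitary condition \eqref{canonicalbl}. Injectivity and the Boolean part are essentially the content of the Rasiowa--Sikorski construction. Observe first that $Q=\{\{\Diamond^n 1\mid n\in\omega\}\}$ is a countable subset of $\power(A)$, and the single element $X=\{\Diamond^n1\mid n\in\omega\}$ has $\iand X=\iand_{n\in\omega}\Diamond^n1=0\in A$ by hypothesis \eqref{infmeet}, so Theorem~\ref{th:rs} applies. Hence for $a_1\not\leq a_2$ (equivalently $a_1\not< a_1\land a_2$, or directly $a_1\not\le a_2$ in the lattice order) we can separate them by a $Q$-filter, which gives injectivity of $\eta_A$; the Boolean homomorphism properties ($\eta_A$ preserves $\cup$, $\cap$, complement, $0$, $1$) are routine from the definition of prime filter, noting that $\qflt(A)\subseteq\{\text{prime filters}\}$.

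For the modal clause I would show $\eta_A(\Box x)=\Box_{\dframe(A)}\eta_A(x)$, i.e. for every $F\in\qflt(A)$, $\Box x\in F$ iff for all $G\in\qflt(A)$ with $(F,G)\in R_Q$ we have $x\in G$. The forward direction is immediate: if $\Box x\in F$ and $\Box^{-1}F\subseteq G$ then $x\in\Box^{-1}F\subseteq G$. The converse is the standard ``existence lemma'': assuming $\Box x\notin F$, I must produce a $Q$-filter $G$ with $\Box^{-1}F\subseteq G$ and $x\notin G$. Let $F_0$ be the filter generated by $\Box^{-1}F$ and $I_0$ the ideal generated by $\{-x\}$ together with $\{\Diamond^n1\mid n\in\omega\}$; one checks $F_0\cap I_0=\emptyset$ using $\Box x\notin F$ and normality of $\Box$ (if some $b\in\Box^{-1}F$ satisfied $b\land -x\le \Diamond^k1$, i.e. $b\le x\lor\Diamond^k1$, then $\Box b\le\Box x\lor\Box\Diamond^k1$, and one derives a contradiction — this finiteness argument is where the condition \eqref{infmeet} enters and is the main obstacle). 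Then Theorem~\ref{th:pft} yields a prime filter $G\supseteq F_0$ disjoint from $I_0$; since $\Diamond^n1\in I_0$ for all $n$, $G$ avoids every $\Diamond^n1$, so in particular $G$ is a $Q$-filter (the only $X\in Q$ has $X\not\subseteq G$, so the $Q$-condition is vacuous), and $x\notin G$ since $-x\in I_0$.

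Finally, \eqref{canonicalbl} is almost free from the construction: $\bigcap_{n\in\omega}\eta_A(\Diamond^n1)=\{F\in\qflt(A)\mid \Diamond^n1\in F\text{ for all }n\in\omega\}$. But if $F$ contained every $\Diamond^n1$, then $\{\Diamond^n1\mid n\in\omega\}\subseteq F$, and since this is the unique member of $Q$ and $F$ is a $Q$-filter we would get $0=\iand_{n\in\omega}\Diamond^n1\in F$, contradicting that $F$ is a (proper) prime filter. Hence the intersection is empty.

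The routine verifications (Boolean homomorphism clauses, the forward directions) I would state briefly; I expect the genuine work to be the disjointness check $F_0\cap I_0=\emptyset$ in the existence lemma, where the hypothesis $\iand_{n\in\omega}\Diamond^n1=0$ must be used to control the ideal generated by the infinitely many $\Diamond^n1$ together with $-x$, reducing an a priori infinitary obstruction to a finite computation with a single $\Diamond^k1$.
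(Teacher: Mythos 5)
Your overall architecture (Boolean part and injectivity via Rasiowa--Sikorski, the forward direction of the modal clause, and the vacuity argument for \eqref{canonicalbl}) matches the paper, and you correctly locate the crux: after producing the prime filter $G$ witnessing $\Box x\notin F$, one must argue that $G$ is a $Q$-filter. But your proposed resolution of that crux does not work. First, the algebraic step you sketch is invalid: from $b\le x\lor\Diamond^k1$ you cannot infer $\Box b\le\Box x\lor\Box\Diamond^k1$, since $\Box$ does not distribute over joins. Second, and more seriously, the disjointness claim itself is false: putting \emph{all} of $\{\Diamond^n1\mid n\ge 1\}$ into the ideal demands a successor $G$ in which \emph{every} $\Diamond^n1$ fails, i.e.\ a dead-end successor refuting $x$, and such a $G$ need not exist. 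Concretely, take the three-element chain $F=(\{0,1,2\},>)$, $A=\dalg(F)$, let $F$ be the prime filter at the world $2$ and $x=\{0,2\}$. Then $\Box x\notin F$, but $x\lor\Diamond_F 1=\{0,2\}\cup\{1,2\}=1$, which lies in $\Box^{-1}F$, so your $F_0$ and $I_0$ already meet; the only successor refuting $x$ is the world $1$, at which $\Diamond\top$ holds. Since $A$ here satisfies \eqref{infmeet}, this is a genuine instance of the theorem on which your existence lemma breaks down. (There is also a sign slip: you generate the ideal from $-x$ but then argue as if from $x$; disjointness from an ideal containing $-x$ would give $-x\notin G$, hence $x\in G$ for the prime filter $G$, the opposite of what you need.)

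The repair is that $G$ need only omit \emph{one} element of $\{\Diamond^n1\mid n\in\omega\}$ for the $Q$-filter condition to hold vacuously, and the right one comes for free: since $F$ is a $Q$-filter and $0\notin F$, there is a least $k\ge 1$ with $\Diamond^k1\notin F$, hence $-\Diamond^k1=\Box{-}\Diamond^{k-1}1\in F$, so $-\Diamond^{k-1}1\in\Box^{-1}F$. Thus \emph{any} prime filter $H\supseteq\Box^{-1}F$ with $x\notin H$ (obtained from the ordinary existence lemma, ideal generated by $x$ alone) automatically contains $-\Diamond^{k-1}1$, hence omits $\Diamond^{k-1}1$ and is a $Q$-filter. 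This is exactly how the paper's proof proceeds; no infinitary ideal and no distribution of $\Box$ over the join is needed.
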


\begin{proof}
It is easy to check that $\eta_{A}$ is a homomorphism of Boolean algebras. 
By Theorem \ref{th:rs}, $\eta_{A}$ is injective.  
The equation (\ref{canonicalbl}) follows from the definition of  $Q$-filters
and (\ref{infmeet}), as follows: 
\begin{align*}
\alpha\in\bigcap_{n\in\omega}\eta_{A}\left(\Diamond^{n}1\right)
&\eq
\forall n\in\omega
\left(
\Diamond^{n}1\in \alpha
\right)\\
&\eq
0\in \alpha. 
\end{align*}

\noindent
We show $\eta_{A}(\Box x)=\Box_{\dframe(A)}\eta_{A}(x)$. 
Suppose that $\alpha\in\eta_{A}(\Box x)$ and $(\alpha,\beta)\in R_{A}$. 
Then, $x\in \beta$ by definition of $R_{A}$, hence 
$\beta\in\eta_{A}(x)$. 
Since $\beta$ is taken arbitrarily, 
$\alpha\in\Box_{\dframe(A)}\eta_{A}(x)$.  
Conversely, suppose 
$\alpha\not\in\eta_{A}(\Box x)$. 
Since $\alpha$ is a $Q$-filter, there exists $k\geq 1$ such that 
$\Diamond^{k}1\not\in\alpha$. 
Since $\alpha$ is a prime filter, 
${-}\Diamond^{k}1=\Box{-}\Diamond^{k-1}1\in\alpha$. 
By Theorem \ref{th:pft}, there exists a prime filter $\gamma$
such that $\Box^{-1}\alpha\subseteq\gamma$ and 
$x\not\in \gamma$. 
$\gamma$ is in $\qflt(A)$, since $\Diamond^{k-1}1\not\in\gamma$, because 
$-\Diamond^{k-1}1\in\gamma$. 
Therefore, $\alpha\not\in\Box_{\dframe(A)}\eta_{A}(x)$, since 
$\gamma\not\in\eta_{A}(x)$ and $(\alpha,\gamma)\in R_{Q}$. 
\end{proof}

\begin{corollary}\label{vldalg}
Let $A$ be a modal algebra and $Q=\{\{\Diamond^{n}1\mid n\in\omega\}\}$. 
For every formula $\phi\in\Phi$,  
if $\dframe(A)\vl\phi$ then $A\vl\phi$. 
\end{corollary}

\begin{proof}
Suppose $A\not\vl\phi$. Then, there exists a valuation $u$ on $A$ such that 
$u(\phi)\not=1$. Let $v$ be a valuation on $\dalg(\dframe(A))$ such that 
$v(p)=\eta_{A}(u(p))$. Since $\eta_{A}$ is injective, $v(\phi)\not=1$. 
Hence, $\dalg(\dframe(A))\not\vl\phi$. 
By Theorem \ref{vlduality}, $\dframe(A)\not\vl\phi$. 
\end{proof}

Theorem \ref{infrep} can be applied to some modal algebras 
which do not satisfy the conditions of the following 
infinitary representation theorem given in \cite{tnk-ono98}. 

\begin{theorem} {\rm (\cite{tnk-ono98})}. 
\label{ma-irt}
Let $A$ be a modal algebra and 
$Q$ a countable subset of $\power(A)$
which satisfies the following conditions:
\begin{enumerate}
\item
$\forall z\in A \forall X\in Q
\left(\{\Box(z\lor x)\mid x\in X\}\in Q\right)$;

\item
$\forall X\in Q
\left(\iand X\in A\right)$;

\item
$\forall X\in Q
\left(\iand\Box X=\Box\iand X\right)$.

\end{enumerate}

\noindent
Then, 
a map
$\eta:A\yy\dalg\left(\dframe(A)\right)$ defined by 
$\eta:x\mapsto\{\alpha\in\qflt(A)\mid  x\in\alpha\}$
is an embedding of modal algebras 
which satisfies 
$\eta\left(\iand X\right)=\bigcap\eta[X]$
for every $X\in Q$. 
\end{theorem}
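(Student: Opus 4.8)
The plan is to follow the same template as the proof of Theorem~\ref{infrep}: first show that $\eta$ is a homomorphism of Boolean algebras, then that it is injective, then that it commutes with $\Box$, and finally read off preservation of the designated meets. The first point is routine (it amounts to the standard properties of prime filters: $x\wedge y\in F$ iff $x\in F$ and $y\in F$, and $-x\in F$ iff $x\notin F$). Injectivity follows from Theorem~\ref{th:rs}: if $x\neq y$, then $x\not\le y$ or $y\not\le x$, and Theorem~\ref{th:rs} --- applicable because $Q$ is countable and $\iand X\in A$ for each $X\in Q$ by~(1) --- separates $x$ from $y$ by a $Q$-filter, so $\eta(x)\neq\eta(y)$. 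Preservation of the meets indexed by $Q$ is then essentially formal: $\eta(\iand X)\subseteq\bigcap\eta[X]$ holds by monotonicity, while $F\in\bigcap\eta[X]$ says exactly that $X\subseteq F$, whence $\iand X\in F$ by the definition of a $Q$-filter, i.e.\ $F\in\eta(\iand X)$. Conditions~(2) and~(3) have not yet been used.

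The substantive part is $\eta(\Box x)=\Box_{\dframe(A)}\eta(x)$. The inclusion $\subseteq$ is immediate from the definition of $R_{Q}$: if $\Box a\in F$ and $(F,G)\in R_{Q}$, then $a\in\Box^{-1}F\subseteq G$. For $\supseteq$ I must show: if $F\in\qflt(A)$ and $\Box a\notin F$, then there is $G\in\qflt(A)$ with $\Box^{-1}F\subseteq G$ and $a\notin G$. I would produce $G$ by a Rasiowa--Sikorski/Henkin argument. Note that $\Box^{-1}F=\{b\mid\Box b\in F\}$ is a filter (as $\Box 1=1$ and $\Box$ preserves finite meets) that omits $a$ (else $\Box a\in F$). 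Enumerating $Q=\{X_{0},X_{1},\dots\}$, build an increasing chain of proper filters $H_{0}=\Box^{-1}F\subseteq H_{1}\subseteq\cdots$, all omitting $a$ and each of the form ``$\Box^{-1}F$ with finitely many extra generators adjoined'', where at stage $n+1$ one handles $X_{n}$ by adjoining some $-x'$ with $x'\in X_{n}$ if some such adjunction still omits $a$, and otherwise adjoining $\iand X_{n}$. Finally, apply the prime filter theorem (Theorem~\ref{th:pft}) to $\bigcup_{n}H_{n}$ and the ideal $\{y\mid y\le a\}$ to get a prime filter $G$ with $\bigcup_{n}H_{n}\subseteq G$ and $a\notin G$. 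Since each stage records a commitment (``$-x'$ belongs to the filter'' or ``$\iand X_{n}$ belongs to the filter'') that persists in $G$, the filter $G$ is automatically a $Q$-filter provided one knows that at every stage at least one of the two moves is available.

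This availability claim is the heart of the matter, and the only place conditions~(2) and~(3) are used. Suppose, at stage $n+1$, that neither move preserves properness. Writing $H_{n}$ as the filter generated by $\Box^{-1}F$ together with finitely many elements of meet $s$, set $z:=a\vee -s$. ``No $-x'$ may be adjoined'' unravels, for each $x'\in X_{n}$, to $a\vee x'\in H_{n}$, hence to $z\vee x'\in\Box^{-1}F$, i.e.\ $\Box(z\vee x')\in F$; by~(3) the set $\{\Box(z\vee x')\mid x'\in X_{n}\}$ lies in $Q$, so being contained in $F$ it has its meet in $F$. ``$\iand X_{n}$ may not be adjoined'' gives $b\in\Box^{-1}F$ with $b\wedge\iand X_{n}\le z$. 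Distributing $\Box$ over finite meets, using~(2) to distribute $\Box$ over the meet of the member of $Q$ at hand, and using that joins distribute over existing meets in a Boolean algebra, one computes from these data that $\Box z\in F$, i.e.\ $z\in\Box^{-1}F$, and therefore $a\in H_{n}$, contradicting that $H_{n}$ omits $a$. Hence one of the two moves is always available, the construction goes through, and the required $G$ is obtained; this establishes $\eta(\Box x)=\Box_{\dframe(A)}\eta(x)$ and completes the proof.

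The step I expect to be the main obstacle is exactly this availability claim. The delicate issues are: organising the enumeration and the stages so that the auxiliary element $z$, and the sets built from $z$ and the $x'$, stay within the part of $A$ to which~(2) and~(3) genuinely apply (so that the computation yielding $\Box z\in F$ actually closes); maintaining the invariant that each $H_{n}$ is $\Box^{-1}F$ together with only finitely many generators; and checking that passing to the prime extension $G$ at the end does not destroy any of the recorded commitments. Everything else --- the Boolean part, the meet part, and the easy inclusion for $\Box$ --- is straightforward.
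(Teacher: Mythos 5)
The paper itself gives no proof of Theorem~\ref{ma-irt}; it is imported from \cite{tnk-ono98}, and the only instance actually needed is handled separately and much more cheaply in Theorem~\ref{infrep}, where the special shape of $Q=\{\{\Diamond^{n}1\mid n\in\omega\}\}$ lets one take an \emph{arbitrary} prime filter $H\supseteq\Box^{-1}F$ omitting $x$: some $-\Diamond^{k-1}1$ already lies in $\Box^{-1}F$, so $H$ is vacuously a $Q$-filter and no Henkin chain is required. Your plan is the genuinely general argument, and its skeleton is correct: the Boolean part, injectivity via Theorem~\ref{th:rs}, preservation of the meets indexed by $Q$, the easy inclusion for $\Box$, and the chain construction with its bookkeeping (finitely many extra generators per stage, commitments of the form ``$-x'$ is in'' or ``$\iand X_{n}$ is in'' surviving the final application of Theorem~\ref{th:pft}) all go through. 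The availability dichotomy is also set up correctly: failure of every move of the first kind gives $\Box(z\vee x')\in F$ for all $x'\in X_{n}$ with $z=a\vee -s$; condition~(3) and the $Q$-filter property of $F$ then put $\iand_{x'\in X_{n}}\Box(z\vee x')$ into $F$; failure of the second move gives $b\in\Box^{-1}F$ with $b\wedge\iand X_{n}\leq z$; and \emph{if} one knows $\iand_{x'}\Box(z\vee x')\leq\Box(z\vee\iand X_{n})$, then $\Box(z\vee\iand X_{n})\wedge\Box b\leq\Box z$ forces $z\in\Box^{-1}F\subseteq H_{n}$ and hence $a\in H_{n}$, the desired contradiction.

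The gap is exactly that inequality, $\iand_{x\in X}\Box(z\vee x)\leq\Box\left(z\vee\iand X\right)$ (the reverse inequality is automatic by monotonicity of $\Box$). You attribute it to ``using~(2) to distribute $\Box$ over the meet of the member of $Q$ at hand'', but the member of $Q$ at hand is $\{\Box(z\vee x)\mid x\in X\}$, and condition~(2) applied to that set only yields $\iand_{x}\Box\Box(z\vee x)=\Box\iand_{x}\Box(z\vee x)$ --- it distributes $\Box$ over the meet of the already boxed elements, one level too high. What you need is $\Box$-distribution over $\iand_{x}(z\vee x)=z\vee\iand X$, i.e.\ condition~(2) applied to the set $\{z\vee x\mid x\in X\}$, which conditions (1)--(3) do not place in $Q$. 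So as written the step ``one computes that $\Box z\in F$'' does not close. Either the hypotheses must be read as asserting $\iand_{x}\Box(z\vee x)=\Box(z\vee\iand X)$ for all $z\in A$ and $X\in Q$ --- which is how the theorem is actually used here: for the $Q$ built in (\ref{setq}) this identity is precisely Theorem~\ref{localbarcan} --- or you must derive it from (1)--(3) as stated, and I do not see such a derivation. Everything else in your plan is sound, and you correctly identified this as the critical point.
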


For countable modal algebras which satisfy
$\iand_{n\in\omega}\Diamond^{n}1=0$,  
we can show the existence of the embedding 
which preserves the infinite meet 
by Theorem \ref{ma-irt} and the following 
Lemma.

\begin{lemma}\label{localbarcan}
Let $A$ be a modal algebra such that 
$\iand_{n\in\omega}\Diamond^{n}1=0$. 
Then 
for any natural number $k\in\omega$ and any $x_{1},\ldots,x_{k}\in A$,
\begin{multline}\label{infdist}
\iand_{n\in\omega}
\Box\left(
x_{k}\lor
\Box(
x_{k-1}\lor
\cdots
\Box(
x_{2}
\lor
\Box(
x_{1}
\lor
\Diamond^{n}1
)
)
\cdots
)
\right)\\
=
\Box\left(
x_{k}\lor
\Box(
x_{k-1}\lor
\cdots
\Box
(
x_{2}
\lor
\Box
x_{1}
)
\cdots
)
\right).
\end{multline}

\noindent
Especially,
\begin{equation*}
\iand_{n\in\omega}\Box^{k}\Diamond^{n}1=\Box^{k} 0. 
\end{equation*}
\end{lemma}

\begin{proof}
Take any $k\in\omega$. 
It is clear that
the right hand side of (\ref{infdist}) is a lower bound of the set of 
elements in the infinite meet of the left hand side. 
Suppose that there exists $y\in A$ which satisfies
\begin{equation}\label{yleq}
y\leq 
\Box\left(
x_{k}\lor
\Box(
x_{k-1}\lor
\cdots
\Box(
x_{2}
\lor
\Box(
x_{1}
\lor
\Diamond^{n}1
)
)
\cdots
)
\right)
\end{equation}
for any $n\in\omega$ and 
\begin{equation}\label{ynotleq}
y\not\leq 
\Box\left(
x_{k}\lor
\Box(
x_{k-1}\lor
\cdots
\Box(
x_{2}
\lor
\Box x_{1}
)
\cdots
)
\right). 
\end{equation}
Let 
$$
Q=\{\{\Diamond^{n}1\mid n\in\omega\}\}. 
$$
By Theorem \ref{th:rs}, 
there exists a $Q$-filter $\alpha$ of 
$A$ such that 
$y\in \alpha$
and 
\begin{equation*}
\Box\left(
x_{k}\lor
\Box(
x_{k-1}\lor
\cdots
\Box(
x_{2}
\lor
\Box x_{1}
)
\cdots
)
\right)
\not\in \alpha.
\end{equation*}
By $\iand_{n\in\omega}\Diamond^{n}1=0$, 
there exists $m\in\omega$ such that 
$\Diamond^{m}1\not\in\alpha$. 
Since $\Diamond 1\leq 1$ and the operator $\Diamond$ is order preserving, 
$\Diamond^{n+1} 1\leq \Diamond^{n} 1$ for any $n\in\omega$. 
Hence, 
$\Diamond^{m+k+1}1\not\in\alpha$. 
Then, 
$-\Diamond^{m+k+1}1=\Box^{k+1}({-}\Diamond^{m}1)\in\alpha$. 
By 
\begin{align*}
\Box^{k+1}({-}\Diamond^{m}1)
&\land
\Box\left(
x_{k+1}\lor
\Box(
x_{k}\lor
\cdots
\Box(
x_{2}
\lor
\Box(
x_{1}
\lor
\Diamond^{m}1
)
)
\cdots
)
\right)\\
&\leq
\Box\left(
x_{k+1}\lor
\Box(
x_{k}\lor
\cdots
\Box(
x_{2}
\lor
\Box(
x_{1}
\lor
({-}\Diamond^{m}1\land\Diamond^{m}1)
)
)
\cdots
)
\right)\\
&=
\Box\left(
x_{k+1}\lor
\Box(
x_{k}\lor
\cdots
\Box(
x_{2}
\lor
\Box
x_{1}
)
\cdots
)
\right)
\end{align*}
and (\ref{yleq}), we have 
$$
\Box\left(
x_{k+1}\lor
\Box(
x_{k}\lor
\cdots
\Box(
x_{2}
\lor
\Box
x_{1}
)
\cdots
)
\right)
\in\alpha, 
$$
which is contradiction. 
\end{proof}

Suppose $A$ is a countable modal algebra which satisfies 
$\iand_{n\in\omega}\Diamond^{n}1=0$. Define $Q\subseteq\power(A)$ as follows: 
\begin{align}
&Q_{0}
=
\left\{
\{\Diamond^{n}1\mid n\in\omega\}
\right\}
;\notag\\
&Q_{n+1}
=
\left\{
\left\{\Box(z\lor x)\mid x\in X\right\}
\mid
z\in A, X\in Q_{n}
\right\};\notag\\
&
Q=\bigcup_{n\in\omega}Q_{n}. \label{setq}
\end{align}
Then, $Q$ is countable. 
Hence, 
Lemma \ref{localbarcan} and 
Theorem \ref{ma-irt} imply that 
there exists an embedding $\eta_{A}:A\yy\dalg(\dframe(A))$ 
which satisfies 
$\eta_{A}\left(\iand_{n\in\omega}\Diamond^{n}1\right)=0$. 
However, 
there exists a modal algebra which satisfies
$\iand_{n\in\omega}\Diamond^{n}1=0$ but the cardinality of $Q$ in 
(\ref{setq}) is uncountable, as follows. 
Let $F=\langle W,R\rangle$ be a Kripke frame where
$W$ is the set of mapping from $\omega$ to $\omega$ and 
$R$ is a binary relation on $W$ such that 
$$
(f,g)\in R\eq
\forall i\in\omega \left(g(i)< f(i)\right),  
$$
for each $f$ and $g$ in $W$.  
Then, $1_{\dalg(F)}=W$ and $0_{\dalg(F)}=\emptyset$, and 
$\dalg(F)$ satisfies $\iand_{n\in\omega}\Diamond_{F}^{n}1_{\dalg(F)}=0_{\dalg(F)}$. 
We show that the cardinality of $Q$ in 
(\ref{setq}) is uncountable. 
For each mapping $f:\omega\yy\omega$, let 
$$
\da f
=
\{g\mid
g:\omega\yy\omega,\ \forall i\in\omega(g(i)\leq f(i)) 
\}. 
$$
Then, 
$$
\{
\mdl_{F}
\left(
\da f\cup\Diamond_{F}^{n}W
\right)
\mid
n\in\omega
\}\in Q_{1},  
$$
and 
$$
\iand\{
\mdl_{F}
\left(
\da f\cup\Diamond_{F}^{n}W
\right)
\mid
n\in\omega
\}
=
\da(f+1). 
$$
Therefore, 
$$
\aleph_{0}<\sharp Q_{1}\leq\sharp Q. 
$$
Thus, $\dalg(F)$ does not satisfy the conditions of 
Theorem \ref{ma-irt}. On the other hand,  
Theorem \ref{infrep} can be applied to $\dalg(F)$.

\section{An $\omega$-rule for $\propgl$}\label{non-compact_rule}

In this section, we introduce a proof system $\pglsys$, which has 
an $\omega$-rule, and show that $\pglsys$ is a proof system for 
the logic $\propgl$ of provability. 
The logic $\propgl$ is the smallest normal modal logic 
which includes $\logick$ and the L\"{o}b formula 
$
\Box(\Box p\supset p)\supset \Box p
$.
It is known that 
$\defframe(\propgl)=\framecw$ and 
$\deflogic(\framecw)=\propgl$. 
It is also known that 
$\deflogic(\framefi)=\propgl$. 
As 
$\framefi\subseteq\framelf\subseteq\framecw$, 
$\propgl$ is sound and complete with respect to $\framelf$.

The axioms of $\pglsys$ are all classical tautologies and 
the following axiom schemata of modal logic: 
\begin{align*}
\text{($\mathsf{K}$)}
&:
\Box(p\supset q)\supset(\Box p\supset \Box q);\\
\text{({\sf 4})}
&:
\Box p\supset\Box\Box p.
\end{align*}
The inference rules of $\pglsys$ are 
modus ponens, uniform substitution, generalization and 
the following $\omega$-rule: 

\medskip

\noindent
$$
\dmndn: \ 
\frac
{\phi\supset\Diamond^{n}\top\hspace{10pt}
(\text{$\forall n\in\omega$})}
{\phi\supset\bot}.
$$

The rule $\dmndn$ has the following property.

\begin{theorem}\label{lfstvl}
For any transitive Kripke frame $F$, 
$F\in\framelf$ if and only if $\dmndn$ is strongly valid in $F$. 
\end{theorem}

\begin{proof}
Suppose $F\in\framelf$. Take any valuation $v$ on $F$ and any formula $\phi$. 
By Theorem \ref{path-meet}, 
\begin{equation*}
\bigcap_{n\in\omega}v(\phi\supset\Diamond^{n}\top)
=
v(\phi)\supset
\bigcap_{n\in\omega}\Diamond_{F}^{n}W
=
v(\phi)\supset
\emptyset
=
v(\phi\supset\bot). 
\end{equation*}
Conversely, suppose $\dmndn$ is strongly valid in a transitive Kripke 
frame $F=\langle W,R\rangle$. 
Let $\phi$ in $\dmndn$ be $\top$. Then, for any valuation $v$ on $F$, 
$$
\bigcap_{n\in\omega}\Diamond^{n}W
=
\bigcap_{n\in\omega}v\left(\top\supset\Diamond^{n}\top\right)
\subseteq
v\left(\top\supset\bot\right)
=
\emptyset.
$$
Hence, $F\in\framelf$, by 
Theorem \ref{path-meet}. 
\end{proof}

It is shown in \cite{tnk18} that $\pglsys$ is a proof system 
for $\propgl$, that is, 
\begin{equation}\label{propglpglsys}
\phi\in\propgl\text{ if and only if }\gldb\phi
\end{equation}
for any formula $\phi$. 
In this paper, we give another proof of the only if part of this fact. 
To make the article self-contained, we first show the if part.

\begin{theorem}\label{glsoundness}
{\rm (\cite{tnk18}).}
For any formula $\phi$, 
if 
$\gldb\phi$, then 
$\phi\in\propgl$. 
\end{theorem}

\begin{proof}
Since $\propgl=\deflogic(\framelf)$, 
it is enough to show that $\pglsys$ is sound with respect to 
$\framelf$. 
By Theorem \ref{lfstvl}, 
$\dmndn$ is weakly valid in $\framelf$. 
It is clear that all of other rules of $\pglsys$ 
are weakly valid in $\framelf$. 
\end{proof}

In \cite{tnk18}, the only if part of (\ref{propglpglsys}) is 
proved by showing Kripke completeness of $\pglsys$ with respect to $\framelf$. 
In this paper, we give a direct syntactical proof of it.

\begin{theorem}
{\rm (\cite{tnk18}).}
For any formula $\phi$, 
if 
$\phi\in\propgl$, then 
$\gldb\phi$. 
\end{theorem}

\begin{proof}
It is enough to show that 
the L\"{o}b formula is derivable in $\pglsys$.  
We show that 
\begin{equation}\label{upperseq}
\gldb\neg\left(\Box(\Box p\supset p)\supset \Box p\right)\supset\Diamond^{n}\top
\end{equation}
for any $n\in\omega$, by  
induction on $n\in\omega$. 
The case $n=0$ is trivial. 
Suppose (\ref{upperseq}) holds for $n$. 
Then by applying classical derivations, 
$$
\gldb
\left(\neg\Diamond^{n}\top\land\Box(\Box p\supset p)
\land\left(\Box p\supset p\right)\right)\supset  p. 
$$
Therefore, by necessitation, ({\sf K}), and classical derivations, 
$$
\gldb
\left(\Box\neg\Diamond^{n}\top\land\Box\Box(\Box p\supset p)
\land\Box\left(\Box p\supset p\right)\right)\supset  \Box p.
$$
By the axiom ({\sf 4})  and classical derivations, 
$$
\gldb
\left(\Box\neg\Diamond^{n}\top
\land\Box\left(\Box p\supset p\right)\right)\supset  \Box p.
$$
Therefore, 
$$
\gldb\neg\left(\Box(\Box p\supset p)\supset \Box p\right)\supset\Diamond^{n+1}\top. 
$$
Hence, (\ref{upperseq}) holds for any $n\in\omega$. 
By $\dmndn$, 
$\gldb \Box(\Box p\supset p)\supset \Box p$. 
\end{proof}

In \cite{tnk18}, 
the proof of Kripke completeness of $\pglsys$ with respect to $\framelf$ 
is given by Henkin construction.   
In the following, 
we give another proof of it in an algebraic manner.

\begin{theorem}\label{glcompleteness}
{\rm (\cite{tnk18}).}
For any formula $\phi$, 
if 
$\framelf\vl\phi$, then 
$\gldb\phi$. 
\end{theorem}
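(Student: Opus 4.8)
The plan is to prove completeness by the standard Lindenbaum–canonical-model argument, but carried out inside the algebraic framework of Section~\ref{modal_algebras}. Suppose $\not\gldb\phi$. First I would form the Lindenbaum algebra $A$ of $\pglsys$: take the set of formulas modulo provable equivalence in $\pglsys$, with the Boolean operations induced by $\land,\neg,\top$ and $\Box$ induced by the modal operator. Since the axioms $[\mathsf{K}]$ and $[\mathsf{4}]$ together with modus ponens, substitution and generalization are valid, $A$ is a modal algebra satisfying $\Box x\leq\Box\Box x$; I must also check that $A$ is closed under the relevant countable meets and that it satisfies (\ref{infmeet}). The point is that $\noncompact$, read algebraically, says precisely: if the class $\{[\phi\supset\Diamond^{n}\top]\mid n\in\omega\}$ has $1$ as a member-wise upper bound relationship forcing $[\phi]\leq[\Diamond^{n}\top]=\Diamond^{n}1$ for all $n$, then $[\phi]\leq 0$; in other words $\iand_{n\in\omega}\Diamond^{n}1$ exists in $A$ and equals $0$. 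So $A$ is a modal algebra satisfying (\ref{infmeet}), and $[\phi]\neq 0$ because $\not\gldb\phi\supset\bot$ (here I use that $\phi$ is not provable, hence $\neg[\phi]\neq 1$).

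Next I would apply Theorem~\ref{infrep} to $A$: there is an embedding $\eta_{A}:A\yy\dalg(\dframe(A))$ of modal algebras satisfying $\bigcap_{n\in\omega}\eta_{A}(\Diamond^{n}1)=\emptyset$. Let $F=\dframe(A)=(\qflt(A),R_{Q})$ be the associated Kripke frame. The condition $\bigcap_{n\in\omega}\eta_{A}(\Diamond^{n}1)=\emptyset$ translates, via $\eta_{A}(\Diamond^{n}1)=\Diamond_{F}^{n}(\qflt(A))$ (using that $\eta_{A}$ commutes with $\Box$ and hence with $\Diamond$ and with $1$), into $\iand_{n\in\omega}\Diamond_{F}^{n}1=\emptyset$ in $\dalg(F)$; by Theorem~\ref{path-meet} this says exactly that $F$ is a frame of locally finite height. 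Moreover $F$ is transitive because $A$ satisfies $\Box x\leq\Box\Box x$ (the relation $R_{Q}$ defined by $\Box^{-1}F\subseteq G$ is transitive on $\qflt(A)$ whenever the $[\mathsf{4}]$-axiom holds). Hence $F\in\framelf$.

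It remains to refute $\phi$ on $F$. Define a valuation $v$ on $F$ by $v(p)=\eta_{A}([p])$ for each propositional variable $p$. A routine induction on the structure of a formula $\psi$, using that $\eta_{A}$ is a modal-algebra embedding (so it commutes with $\lor,\land,\neg,\Box$) and that the interpretation of $\psi$ in $\dalg(F)$ under $v$ is built by exactly the same operations, gives $[\![\psi]\!]_{(F,v)}=\eta_{A}([\psi])$ for every $\psi$. In particular $[\![\phi]\!]_{(F,v)}=\eta_{A}([\phi])$, which is not all of $W$ since $[\phi]\neq 0$ in $A$ and $\eta_{A}$ is injective (indeed $\eta_{A}(0)=\emptyset\neq\eta_{A}([\phi])$). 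So there is a world $G\in\qflt(A)$ with $(F,v),G\not\vl\phi$, hence $F\not\vl\phi$, contradicting $\framelf\vl\phi$. Therefore $\gldb\phi$.

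The main obstacle I anticipate is the algebraic reading of the rule $\noncompact$: I must be careful that "the premises of $\noncompact$ hold in $A$'' is correctly identified with "$\iand_{n\in\omega}\Diamond^{n}1$ exists and equals $0$ in $A$,'' and that in the Lindenbaum algebra the set $\{\Diamond^{n}1\mid n\in\omega\}$ really does have an infimum $0$ — this is where the admissibility of $\noncompact$ in the proof system is used, and one should phrase it as: for any $[\psi]$, if $[\psi]\leq\Diamond^{n}1$ for all $n$ then $\gldb\psi\supset\Diamond^{n}\top$ for all $n$, so $\gldb\psi\supset\bot$, i.e.\ $[\psi]=0$; thus $0$ is the greatest lower bound of $\{\Diamond^{n}1\mid n\in\omega\}$. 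The rest (transitivity of $R_{Q}$, the truth-lemma induction, locally finite height via Theorems~\ref{path-meet} and \ref{infrep}) is essentially bookkeeping, but the soundness of the algebraic translation of the infinitary rule is the conceptual crux.
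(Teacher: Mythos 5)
Your overall route is the paper's own: form the Lindenbaum algebra $A$ of $\pglsys$, observe that the rule $\noncompact$ makes $0$ the greatest lower bound of $\{\Diamond^{n}1\mid n\in\omega\}$ in $A$ (your phrasing of this, via $[\psi]\leq\Diamond^{n}1$ iff $\gldb\psi\supset\Diamond^{n}\top$, is exactly the point the paper uses), apply Theorem~\ref{infrep} to get the embedding $\eta_{A}:A\yy\dalg(\dframe(A))$ with $\bigcap_{n\in\omega}\eta_{A}(\Diamond^{n}1)=\emptyset$, conclude via Theorem~\ref{path-meet} and the $[\mathsf{4}]$-axiom that $\dframe(A)$ is transitive and of locally finite height, and prove the truth lemma by induction using that $\eta_{A}$ is a modal-algebra embedding. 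All of that matches the paper's proof.

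However, your last step is flawed as written. From ``$\phi$ is not provable'' you infer ``$[\phi]\neq 0$'' (equivalently $\neg[\phi]\neq 1$); this is wrong in general — take $\phi=\bot$, which is unprovable although $[\bot]=0$. Unprovability of $\phi$ gives $[\phi]\neq 1$, not $[\phi]\neq 0$. Moreover, your justification that the interpretation of $\phi$ is ``not all of $W$'' — namely $\eta_{A}(0)=\emptyset\neq\eta_{A}([\phi])$ — only shows that $\eta_{A}([\phi])$ is nonempty, which is not what you need: to refute $\phi$ you need a world \emph{outside} $\eta_{A}([\phi])$, i.e.\ $\eta_{A}([\phi])\neq\qflt(A)$. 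The repair is immediate and is exactly what the paper does: since $\not\gldb\phi$, we have $[\phi]\lneqq[\top]$, and injectivity of $\eta_{A}$ gives $\eta_{A}([\phi])\subsetneqq\eta_{A}([\top])=\qflt(A)$, so some $Q$-filter $G$ satisfies $M,G\not\vl\phi$ by the truth lemma, whence $\framelf\not\vl\phi$. (Equivalently, argue that $[\neg\phi]\neq 0$ and pick $G\in\eta_{A}([\neg\phi])$.) With this correction your argument coincides with the paper's proof.
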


\begin{proof}
Define a binary relation $\sim$ on the set $\Phi$ of all formulas by 
$$
\phi_{1}\sim\phi_{2}\eq
\gldb\phi_{1}\equiv\phi_{2}.
$$
Let $A$ be the quotient modal algebra of the set $\Phi$ of all formulas
modulo $\sim$. For each formula $\phi$, we write $[\phi]$ for the equivalence class 
of $\phi$. 
Then, by $\dmndn$,  
\begin{equation}\label{lainfmeet}
\iand_{n\in\omega}\Diamond^{n}[\top]=[\bot]
\end{equation}
holds in $A$. 
Define $Q\subseteq\power(A)$ by 
$Q=\{\{\Diamond^{n}[\top]\mid n\in\omega\}\}$. 
$\dframe(A)$ is transitive, since 
$\Box[\psi]\leq\Box\Box[\psi]$ holds for any formula $\psi$. 
Hence, 
$\dframe(A)\in\framelf$,  
by Theorem \ref{path-meet-alg}. 
Suppose that $\phi$ is not derivable in $\pglsys$. Then, 
$A\not\vl\phi$, by definition of $A$. 
Hence, $\dframe(A)\not\vl\phi$ by Theorem \ref{vldalg}. 
Therefore, $\framelf\not\vl\phi$. 
\end{proof}

It is known that $\propgl$ is not canonical. 
That is, $\deflogic(F)\subsetneqq\propgl$, where $F=\langle W,R\rangle$ is the 
canonical frame of $\propgl$. 
So, in the well-known proof of Kripke completeness of $\propgl$, 
the binary relation $R$ on the canonical frame $F$ is replaced by the following 
$R'$: 
$(\alpha,\beta)\in R' $ if and only if
(1) for all $\Box\phi\in \alpha$, both $\Box\phi\in\beta$ and $\phi\in\beta$, 
and (2) there exists some $\Box\phi\in\beta$ 
such that $\Box\phi\not\in\alpha$.
On the other hand, 
the binary relation on $\dframe(A)$ given in the proof of 
Theorem \ref{glcompleteness}
is the restriction of the 
relation $R$ of the canonical model to $\qflt(A)$. 
Therefore, 
$\dframe(A)$ is a subframe of the canonical frame of $\propgl$.   

\medskip

\begin{corollary}\label{algcompleteness}
Let $C$ be a class of modal algebras which satisfies 
(\ref{infmeet}) and $\mdl x\leq\mdl\mdl x$ for any $x\in A$. 
Then, 
$\phi\in\propgl$ if and only if  
$C\vl\phi$, 
for any formula $\phi$.  
\end{corollary}

\begin{proof}
First, suppose $\phi\in\propgl$. Then, $\framelf\vl\phi$. 
Let $A$ be a modal algebra which satisfies 
(\ref{infmeet}) and $\mdl x\leq\mdl\mdl x$ for any $x\in A$, 
and let 
$Q=\{\{\Diamond^{n}[\top]\mid n\in\omega\}\}$. 
Then, $\dframe(A)\vl\phi$, by Theorem \ref{path-meet-alg}. 
Therefore, $A\vl\phi$, by Corollary \ref{vldalg}. 
Next, suppose $\phi\not\in\propgl$. 
Then, $\pglsys\not\db\phi$ by Theorem \ref{glsoundness}. 
Let $A$ be the quotient modal algebra 
given in the proof of Theorem \ref{glcompleteness}. 
Then, $A\not\vl\phi$. 
Hence, $C\not\vl\phi$,   
as $A\in C$. 
\end{proof}

\section{Classes of Kripke models for $\propgl$}\label{modelsofpglsys}

In this section, we discuss relationship among some classes of 
Kripke frames, each of which characterizes $\propgl$. 
Let 
$\frmwk$ and $\frmst$ be classes of Kripke frames such that  
\begin{align*}
\frmwk
&=
\left\{F\mid
\text{$F$ is transitive and $\dmndn$ is weakly valid in $F$}
\right\};\\
\frmst
&=
\left\{F\mid
\text{$F$ is transitive and $\dmndn$ is strongly valid in $F$}
\right\}.
\end{align*}
We have already discussed that 
$$
\deflogic({\framefi})
=
\deflogic({\framelf})
=
\deflogic({\frmst})
=
\deflogic({\frmwk})
=
\deflogic({\framecw})
=
\propgl,  
$$
that is,  
all of $\framefi$, $\framelf$, $\frmst$, $\frmwk$, and $\framecw$ 
characterize $\propgl$. 
It is proved in Theorem \ref{lfstvl} that 
$\framelf=\frmst$.  
In the rest of this paper, 
we show that 
$$
\framefi
\subsetneqq
\framelf
=
\frmst
\subsetneqq
\frmwk
\subsetneqq
\framecw.   
$$

\begin{theorem}\label{lfnotwvl}
$\framelf\subsetneqq\frmwk$. 
\end{theorem}

\begin{proof}
Since $\framelf=\frmst$, 
$\framelf\subseteq\frmwk$. 
We show  
$\framelf\not=\frmwk$. 
Take a Kripke frame  $F=\langle\omega+1,>\rangle$ (see Figure \ref{figure}). 
Then, 
$F\not\in\framelf$, since the supremum of the length of the paths from $\omega$ is infinite. 
We show that $F\in\frmwk$. 
Suppose that $\dmndn$ is not weakly valid in $F$. 
Then, there exists a formula $\phi$ 
such that
\begin{equation}\label{upper}
\forall n\in\omega
\forall
v:\propvar\yy\power(\omega+1)
\left(
v\left(\phi\right)
\subseteq
{\Diamond_{F}}^{n}(\omega+1)
\right), 
\end{equation}
and there exists $u:\propvar\yy\power(\omega+1)$ such that 
\begin{equation}\label{lower}
\emptyset\not= u\left(\phi\right). 
\end{equation}
By (\ref{upper}), 
\begin{equation}\label{lessthanomega}
v\left(\phi\right)\subseteq\{\omega\} 
\end{equation}
for any $v:\propvar\yy\power(\omega+1)$, and 
by (\ref{lower}) and (\ref{lessthanomega}), 
\begin{equation}\label{equaltoomega}
u\left(\phi\right)=\{\omega\}.  
\end{equation}
Now, for each $n\in\omega$ and each 
$v:\propvar\yy\power(\omega+1)$, we define a map 
$v_{n}:\propvar\yy\power(\omega+1)$ as follows: 
for any $p\in\propvar$, 
$$
v_{n}(p)
=
\begin{cases}
v(p)\cup\{n\} & (\omega\in v(p))\\
v(p)\setminus\{n\} & (\omega\not\in v(p))
\end{cases}.
$$
Easy induction on the construction of the formulas shows that 
for any formula $\psi$ and any natural number $m<n$, 
\begin{equation}\label{lessthann}
m\in v(\psi)\eq m\in v_{n}(\psi)
\end{equation}
holds. 
Also, the following claim holds: 
\begin{claim}\label{claim}
For any formula $\psi$ and 
any $v:\propvar\yy\power(\omega+1)$, 
there exists $N\in\omega$ such that 
for any $n\geq N$ and any subformula $\rho$ of $\psi$, 
\begin{equation*}\label{valuation}
\omega\in v(\rho)\eq n\in v_{n}(\rho). 
\end{equation*}
\end{claim}

Proof of the claim: Induction on the construction of 
$\psi$: 

\begin{figure}[hbtp]
\centering
\includegraphics[width=0.4\textwidth]{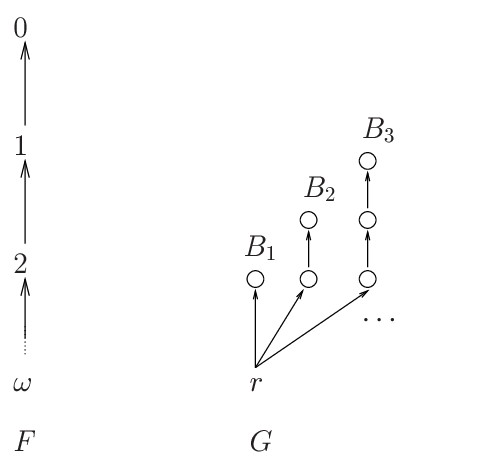}
\caption{}
\label{figure}
\end{figure}

\medskip
\noindent
$\psi=p$:
For any $n\in\omega$, 
$$
\omega\in v(p)\eq n\in v_{n}(p), 
$$
by definition of $v_{n}$. 
Therefore, the claim holds for $N=0$.

\medskip
\noindent
$\psi=\rho_{1}\land\rho_{2}$: 
By the induction hypothesis, for each $i=1$ or $2$, there exist 
$N_{i}\in\omega$ such that the claim holds for any $n\geq N_{i}$ 
and any subformula of $\rho_{i}$, respectively. 
Let $N=\max\{N_{1},N_{2}\}$. Then, for any $n>N$, 
\begin{align*}
\omega\in v(\rho_{1}\land\rho_{2})
&\eq
\text{
$\omega\in v(\rho_{1})$ and $\omega\in v(\rho_{2})$
}\\
&\eq
\text{
$n\in v_{n}(\rho_{1})$ and $n\in v_{n}(\rho_{2})$
}\\
&\eq
n\in v_{n}(\rho_{1}\land\rho_{2}). 
\end{align*}

\noindent
$\psi=\neg\rho$:
Take the same $N\in\omega$ for $\rho$. Then, for any $n\geq N$,  
\begin{align*}
\omega\in v(\neg\rho)
\eq
\omega\not\in v(\rho)
\eq
n\not\in v_{n}(\rho)
\eq
n\in v_{n}(\neg\rho). 
\end{align*}

\medskip
\noindent
$\psi=\Box\rho$:
By the induction hypothesis, there exist 
$N\in\omega$ such that the claim holds for any $n\geq N$ 
and any subformula of $\rho$. 
First, 
suppose that 
$\omega\in v(\Box\rho)$. 
Then, $k\in v(\rho)$, for any $k\in\omega$. 
Hence, for any $n\in\omega$ and any $m<n$, 
$m\in v_{n}(\rho)$
by (\ref{lessthann}). 
Therefore, $n\in v_{n}(\Box\rho)$ for any $n\in\omega$. 
Hence, the claim holds for $N$. 
Next, suppose that $\omega\not\in v(\Box\rho)$. 
Then, there exists $k\in\omega$ such that $k\not\in v(\rho)$. 
If $n>k$, 
$k\not\in v_{n}(\rho)$ by (\ref{lessthann}), and therefore, 
$n\not\in v_{n}(\Box\rho)$.
Hence, the claim holds for $\max\{N,k+1\}$. 
This complete the proof of the claim.

By (\ref{equaltoomega}) and Claim \ref{claim}, there exists $N\in\omega$ 
such that 
$$
N\in u_{N}\left(\phi\right). 
$$
This contradict to (\ref{lessthanomega}). 
Hence, $\dmndn$ is weakly valid in $F$. 
\end{proof}

\begin{corollary}
The class $C$ of modal algebras which is defined by 
$\iand_{n\in\omega}\Diamond^{n}1=0$ and $\mdl x\leq\mdl\mdl x$ 
is not a variety. 
\end{corollary}

\begin{proof}
It is easy to see that we can identify the equations of the language 
of modal algebras with modal formulas.  
Therefore, it is enough to show that $C\subsetneqq \defalg(\deflogic(C))$ to prove 
the corollary. 
By Corollary \ref{algcompleteness}, 
$\deflogic(C)=\propgl$. 
Hence, 
$\defalg(\deflogic(C))$ is the class of all modal algebras in which $\propgl$ is valid. 
Let $F$ be the Kripke frame given in the proof of Theorem \ref{lfnotwvl}. 
Then, $\dalg(F)\not\in C$ by Theorem \ref{path-meet}. 
On the other hand, $\propgl$ is valid in $\dalg(F)$ by Theorem \ref{vlduality}. 
Therefore, 
$\dalg(F)\in\defalg(\deflogic(C))\setminus C$. 
\end{proof}

\begin{theorem}\label{wknotcw}
$\frmwk\subsetneqq\framecw$.
\end{theorem}

\begin{proof}
Suppose $F\in\frmwk$. Then, $F\vl\propgl$, 
since every inference rule in $\pglsys$ is weakly valid in $F$. 
Hence, $\frmwk\subseteq\framecw$. 
We show that $\frmwk\not=\framecw$.
Let $G=\langle W,R\rangle$ be a Kripke frame which consists of the 
root $r$ and disjoint branches $B_{n}$ for each $n\in\omega$, 
where $B_{n}$ is order isomorphic to $\langle n,<\rangle$ for each $n\in\omega$
(Figure \ref{figure}). 
It is clear that 
$G\in\framecw$.  
We show that $G\not\in\frmwk$. 
Let
$$
\phi=\Box\left(p\land\Box p\supset q\right)\lor
\Box\left(q\land\Box q\supset p\right).  
$$
It is easy to prove that for any Kripke frame $\langle W,R\rangle$ and any $x\in W$, 
$\phi$ satisfies the following: 
\begin{multline*}
\forall v:\propvar\yy \power(W)
\left(
\langle W,R,v\rangle,x\vl\phi
\right)\\
\eq
\forall y\in W\forall z\in W
\left(
(x,y),\  (x,z)\in R,\ y\not=z
\thn
(y,z)\in R\text{ or }(z,y)\in R
\right). 
\end{multline*}
Then, for any $n\in\omega$, $G\vl\neg \phi\supset\Diamond^{n}\top$, 
because, for every $v:\propvar\yy \power(W)$, 
$w\in v(\phi)$ for any $w\not=r$ 
and $r\in v(\Diamond^{n}\top)$. 
However, $G\not\vl\phi$, since $r\not\in v(\phi)$. 
Therefore, $\dmndn$ for $\neg \phi$ is not weakly valid in $G$. 
\end{proof}

By Theorem \ref{wknotcw}, $\dmndn$ is not weakly valid in $\framecw$. 
Therefore, although $\pglsys$ is sound with respect to $\framecw$, 
the soundness cannot be proved by simple induction on the height of 
the derivations in $\pglsys$.

\newcommand{\noop}[1]{}


\begin{thebibliography}{10}

\bibitem{blc-rjk-vnm01}
Patrick Blackburn, Maarten de~Rijke, and Yde Venema.
\newblock {\em Modal Logic}.
\newblock Cambridge, third edition, 2001.

\bibitem{bls93}
George Boolos.
\newblock {\em The logic of provability}.
\newblock Cambridge University Press, 1993.

\bibitem{chg-zkh97}
Alexander Chagrov and Michael Zakharyaschev.
\newblock {\em Modal Logic}.
\newblock Oxford University Press, 1997.

\bibitem{dvy-prs90}
Brian~A. Davey and Hilary~A. Priestley.
\newblock {\em Introduction to Lattices and Order}.
\newblock Cambridge University Press, 1990.

\bibitem{hgh-crs96}
George~E. Hughes and Maxwell~J. Cresswell.
\newblock {\em A New Introduction to Modal Logic}.
\newblock Routledge, 1996.

\bibitem{jns-trs51}
Bjarni J{\'o}nsson and Alfred Tarski.
\newblock {Boolean} algebras with operators {I}.
\newblock {\em American Journal of Mathematics}, 73:891--931, 1951.

\bibitem{jns-trs52}
Bjarni J{\'o}nsson and Alfred Tarski.
\newblock {Boolean} algebras with operators {II}.
\newblock {\em American Journal of Mathematics}, 74:127--162, 1952.

\bibitem{rsw-skr63}
Helena Rasiowa and Roman Sikorski.
\newblock {\em The Mathematics of Metamathematics}.
\newblock PWN-Polish Scientific Publishers, 1963.

\bibitem{tnknoncpt}
Yoshihito Tanaka.
\newblock Model existence in non-compact modal logic.
\newblock {\em Studia Logica}, 67:61--73, 2001.

\bibitem{tnk18}
Yoshihito Tanaka.
\newblock A cut-free proof system for a predicate extension of the logic of
  provability.
\newblock {\em Reports on Mathematical Logic}, 53:97--109, 2018.

\bibitem{tnk-ono98}
Yoshihito Tanaka and Hiroakira Ono.
\newblock The {Rasiowa-Sikorski} lemma and {Kripke} completeness of predicate
  and infinitary modal logics.
\newblock In Michael Zakharyaschev, Krister Segerberg, Maarten de~Rijke, and
  Heinrich Wansing, editors, {\em Advances in Modal Logic}, volume~2, pages
  419--437. CSLI Publication, 2000.

\end{thebibliography}
\end{document}